\newtheorem{theorem}{Theorem}[section]
\newtheorem{proposition}[theorem]{Proposition}
\newtheorem{lemma}[theorem]{Lemma}
\newtheorem{corollary}[theorem]{Corollary}
\newtheorem{conjecture}[theorem]{Conjecture}
\theoremstyle{definition}
\newtheorem{remark}[theorem]{Remark}
\newtheorem{example}[theorem]{Example}
\title [Standard Norm Varieties for Milnor Symbols mod $p$]{Standard Norm Varieties for Milnor Symbols mod $p$}
\author [Dinh Huu Nguyen] {Dinh Huu Nguyen}
\email {dhn {\it at} ucla.edu}
\begin{document}
\maketitle
Abstract: We prove that the standard norm varieties for Milnor symbols mod $p$ of length $n$ are birationally isomorphic to Pfister quadrics when $p = 2$, to Severi-Brauer varieties when $p > 2, n = 2$, and to varieties defined by reduced norms of cyclic algebras when $p > 2, n = 3$. In the case $p = 2$ and the case $p > 2, n = 2$, the results imply that the standard norm varieties for two equal Milnor symbols mod $p$ are birationally isomorphic, and we conjecture this in general.

\section{Introduction}
The norm residue theorem relates the Milnor K-theory mod $p$ of a field $k$ with the \'{e}tale cohomology of $k$ with coefficients in the twists of $\mu_p$. More precisely, it states that for each prime $p \neq \text{char}(k)$ and each weight $n \geq 0$ there exists an isomorphism
$$K_n^M(k)/p \cong H_{\acute{e}t}^n(k, \mu_p^n)$$

In 1996, V. Voevodsky proved the special case of $p = 2$, known as the Milnor conjecture in \cite{VoevodskyMCwithZ/2 -c}. In 2011, he proved the general case of the norm residue theorem, also known as the Bloch-Kato conjecture in \cite{VoevodskyOnMCwithZ/l-c}. His proof used a splitting variety with certain properties for a given Milnor symbol $\{a_1,..., a_n\}$ in $K^M_n(k)/p$. One construction for such splitting varieties was provided by M. Rost in \cite[Section 3]{HaesemeyerWeibelNVandtheCL}. Another construction for these varieties was suggested by V. Voevodsky in \cite[Section 2]{JoukhovitskiSuslinNV}. The entire theorem is being written in book form by C. Haesemeyer and C. Weibel in \cite{HaesemeyerWeibeltheNRTinMC}.

In section \ref{symmetricpowers}, we summarize Voevodsky's construction. It uses symmetric powers and produces what are called standard norm varieties.

In section \ref{p=2,alln}, we show in theorem \ref{varietytoquadric} that the standard norm varieties are birationally isomorphic to Pfister quadrics defined by subforms of Pfister forms when $p = 2$. Then we combine this result with the Chain P-equivalence Theorem by R. Elman and T. Y. Lam \cite[Main Theorem 3.2]{ELPFandtheKofF} and properties of quadratic forms to prove that the standard norm varieties for two equal symbols are birationally isomorphic in corollary \ref{p=2corollary}.

In section \ref{p>2,n=2}, we use Galois descent to show in theorem \ref{varietytoSB} that the standard norm varieties are birationally isomorphic to Severi-Brauer varieties when $p > 2, n = 2$ and get a similar corollary \ref{n=2corollary}.

In section \ref{p>2,n=3}, we use Galois descent to show in theorem \ref{varietytovarietydefinedbyreducednormofcyclicalgebra} that the standard norm varieties are birationally isomorphic to varieties defined by reduced norms of cyclic algebras when $p > 2, n = 3$. N. Karpenko and A. Merkurjev use this result and induction to prove $A$-triviality for standard norm varieties in \cite{KarpenkoMerkurjevOSNV}.

Given the above two corollaries, we make the following conjecture.
\begin{conjecture} The standard norm varieties for $\{a_1,..., a_n\}$ and $\{b_1,..., b_n\}$ are birationally isomorphic if $\{a_1,..., a_n\} = \{b_1,..., b_n\}$ in $K^M_n(k)/p$ for all $p, n$.
\end{conjecture}

Acknowledgements: This paper was my thesis, I thank my advisor Alexander Merkurjev for his guidance and my colleague Aaron Silberstein for his influence on submitting it for publication.

\section{Symmetric Powers}\label{symmetricpowers}
Throughout this paper, $p$ is a prime and $k$ is a base field of characteristic $0$ containing the $p^{\text{th}}$ roots of unity. Associated to each nontrivial Milnor symbol $\{a_1,..., a_n\}$ in $K^M_n(k)/p$ are the following notions. A general reference for Milnor K-theory is \cite{MilnorAK-TandQF}.

\dfn A field extension $L/k$ is called a {\it splitting field} for $\{a_1,..., a_n\}$ if $\{a_1,..., a_n\} = 0$ in $K^M_n(L)/p$.

\dfn A smooth variety $X$ is called a {\it splitting variety} for $\{a_1,..., a_n\}$ if its function field $k(X)$ is a splitting field for $\{a_1,..., a_n\}$. In addition, it is called a {\it generic splitting variety} for $\{a_1,..., a_n\}$ if any splitting field $L$ for $\{a_1,..., a_n\}$ has a point in $X$, i.e. if there exists a morphism $Spec(L) \rightarrow X$ over $k$.

Such generic splitting varieties are known to exist for all $n$ when $p = 2$ and only for $n \leq 3$ when $p > 2$. However, if $L'/L$ if a finite extension of degree prime to $p$ and $L'$ splits $\{a_1,..., a_n\}$ then $L$ also splits $\{a_1,..., a_n\}$ (using transfer and norm maps). Therefore we can relax our last definition.

\dfn A smooth variety $X$ is called a $p${\it -generic splitting variety} for $\{a_1,..., a_n\}$ if it is a splitting variety for $\{a_1,..., a_n\}$ and for any splitting field $L$ for $\{a_1,..., a_n\}$ there exists an extension $L'/L$ of degree prime to $p$ with a point in $X$. In addition, it is called a {\it norm variety} for $\{a_1,..., a_n\}$ if it is projective and geometrically irreducible of dimension $p^{n-1} - 1$.

\begin{example}\label{} When $n = 1$, a norm variety for $\{a_1\}$ is $Spec(L)$ where $L = k(\sqrt[p]{a_1})$. When $n = 2$, a norm variety for $\{a_1, a_2\}$ is the Severi-Brauer variety $SB(A)$ associated to the cyclic algebra $A = (a_1, a_2, \zeta_p)_k$. 
\end{example}

We now describe a standard way to produce these norm varieties for all $n$, which are called {\it standard norm varieties}.

Let $X$ be a smooth quasi-projective geometrically irreducible variety. The symmetric group $S_p$ acts on the product $X^p$ and induces the quotient variety $S^p(X)$. This quotient variety is geometrically irreducible and normal. Note that $S_p$ acts freely on $X^p \backslash \bigtriangleup$ and $U := (X^p \backslash \bigtriangleup)/S_p$ is an open subset in $S^p(X)$, where $\bigtriangleup$ is the union of all diagonals in $X^p$.

For every normal and irreducible scheme $Y$, the set of morphisms $Hom(Y, S^p(X))$ can be identified with the set of all effective cycles $Z \subset X \times Y$ such that each component of $Z$ is finite surjective over $Y$ and that the degree of $Z$ over $Y$ is $p$. In particular, the identity map $S^p(X) \rTo^{id} S^p(X)$ corresponds to the incidence cycle $Z \subset X \times S^p(X)$. In fact $Z$ is a closed subscheme, it is the image of the closed embedding $X \times S^{p-1}(X) \rTo X \times S^p(X), (x, y) \mapsto (x, x + y)$. Compose this with projection onto the second factor and we get a map $p: X \times S^{p-1}(X) \rightarrow X \times S^p(X) \rightarrow S^p(X)$. It is finite surjective of degree $p$. Thus we get a diagram
\begin{diagram}
X \times S^{p-1}(X) & \lTo & p^{-1}(U) & \lTo^{/S_{p-1}} & X^p \backslash \bigtriangleup \\ 
\dTo^p & & \dTo^{p|_{p^{-1}(U)}} & \ldTo^{/S_p} & \\
S^p(X) & \lTo & U & & \\
\end{diagram}

We see that both maps from $X^p \backslash \bigtriangleup$ are Galois \'{e}tale coverings, $p|_{p^{-1}(U)}$ is a finite \'{e}tale map of degree $p$, and $U$ is smooth. Furthermore $p_*f(\mathcal{O}_{X \times S^{p-1}(X)})$ is a coherent $\mathcal{O}_{S^p(X)}$-algebra and the sheaf $ \mathcal{A} := p_*(\mathcal{O}_{X \times S^{p-1}(X)} |_{p^{-1}(U)})$ is a locally free $\mathcal{O}_U$-algebra of rank $p$. This latter sheaf corresponds to the vector bundle $V:= Spec(S^{\bullet}(\mathcal{A}^{\nu}))$ of rank $p$ over $U$. Here $\mathcal{A}^{\nu}$ denotes the dual of $\mathcal{A}$ and $S^{\bullet}(\mathcal{A}^{\nu})$ denotes its symmetric algebra. There is a well defined norm map $\mathcal{A} \rTo^{N} \mathcal{O}_U$. Locally $N$ is a homogeneous polynomial of degree $p$, that is, $N \in S^p(\mathcal{A}^{\nu})$.

A norm variety $X(a_1,..., a_n)$ for $\{a_1,..., a_n\}$ is then constructed by induction. For $n = 2$, we take $X = X(a_1, a_2)$ in the preceding construction to be the Severi-Brauer variety $SB(A)$ associated to the cyclic algebra $A = (a_1, a_2, \zeta_p)_k$. Suppose we have constructed a norm variety $X(a_1,..., a_{n-1})$ for $\{a_1,..., a_{n-1}\}$. Again let that be $X$ and let $W \subset V$ be the hypersurface defined by the equation $N - a_n = 0$. By construction $W$ has dimension $p^{n-1} - 1$. By \cite[2.1]{JoukhovitskiSuslinNV} it is smooth over $U$ (hence smooth) and geometrically irreducible. By resolution of singularities we can embed $W$ as an open subvariety of a new smooth projective geometrically irreducible variety $X'$ of the same dimension. Together \cite[2.3]{JoukhovitskiSuslinNV}, \cite[2.4]{JoukhovitskiSuslinNV} and its subsequent argument show this $X'$ is a $p$-generic splitting variety for $\{a_1,..., a_n\}$. Hence $X'$ is the norm variety that we seek. Note that its construction depends solely on the tuple $(a_1,..., a_n)$.

\begin{remark} \label{startofconstruction} The inductive construction could in fact start with $n = 1$. We describe explicitly what happens at this stage. Take $X = X(a_1) = Spec(L)$ where $L = k(\sqrt[p]{a_1})$. If $\bar{k}$ is the separable closure of $k$ then $\overline{X} = X \times_{Spec(k)} Spec(\bar{k})$ has $p$ points, call them $1, 2,..., p - 1, p$. From there,

$\overline{X}^p = \{\text{points on the diagonals}\} \sqcup \{(n_1, n_2,..., n_p) \mid 1 \leq n_i \leq p \text{ and } n_i \neq n_j \text{ for all } i, j\}$

$S^p(\overline{X}) = \overline{X}^p/S_p = \{\text{classes of points on the diagonals}\} \sqcup \{\overline{(1, 2,..., p)}\}$

$\overline{X}^p \backslash \bigtriangleup = \{(n_1, n_2,..., n_p) \mid 1 \leq n_i \leq p \text{ and } n_i \neq n_j \text{ for all } i, j\}$

$(\overline{X}^p \backslash \bigtriangleup)/S_p = \{\overline{(1, 2,..., p)}\}$
\newline 

The above square thus looks like this
\begin{diagram}
\overline{X} \times S^{p-1}(\overline{X}) & \lTo & p^{-1}(U) = \{(n, \overline{(2, 3,..., p)}), 1 \leq n \leq p\} \\ 
\dTo^p & & \dTo^{p|_{p^{-1}(U)}}\\
S^p(\overline{X}) & \lTo & U = \{\overline{(1, 2,..., p)}\} \\
\end{diagram}

Over $k$ it looks like this
\begin{diagram}
X \times S^{p-1}(X) & \lTo & p^{-1}(U) \cong Spec(L) \\ 
\dTo^p & & \dTo^{p|_{p^{-1}(U)}}\\
S^p(X) & \lTo & U \cong Spec(k) \\
\end{diagram}

We will use this in theorem \ref{varietytoquadric} and theorem \ref{varietytoSB}.
\end{remark}

\begin{remark} \label{reductiontobirational} Since our problem only concerns birational isomorphism, we can always replace our varieties with birationally isomorphic ones when it suits our purpose but does not change our result. Or we can consider what happens with the generic fiber. For example, in theorem \ref{varietytoquadric} we consider the residue field of the generic fiber of the map $p$ in our construction without mentioning $V, W$ and $X'$.
\end{remark}

\section{When $p = 2, \text{ all } n$ }\label{p=2,alln}
When $p = 2$, we show that the standard norm varieties are birationally isomorphic to Pfister quadrics associated to Pfister forms. This result together with the Chain P-equivalence Theorem and properties of quadratic forms will allow us to compare the standard norm varieties for two equal symbols.

For a quadratic form $\varphi$, let $A_{\varphi}$ denote its symmetric matrix and $D_k(\varphi) \subseteq k$ denote the set of its values. Also for an $n$-tuple $(a_1,..., a_n)$, let $\varphi_n$ denote the $n$-fold Pfister form $\langle\langle a_1,..., a_n \rangle\rangle = \prod_{i = 1}^n \langle 1, - a_i \rangle$. Furthermore we associate to $\varphi_n$ the subform $\psi_n = \langle\langle a_1,..., a_{n-1} \rangle\rangle \perp \langle -a_n \rangle$ and denote the quadric defined by $\psi_n$ as $Z(\psi_n)$, known as a Pfister quadric. Below are a few more definitions. A general reference for quadratic forms is \cite{LamItoQFoverF}.

\dfn Two quadratic forms $\varphi$ and $\varphi'$ are said to be equivalent, written $\varphi \cong \varphi'$, if there exists a matrix $C \in GL(k)$ such that $A_{\varphi'} = CA_{\varphi}C^t$.

\dfn Two Pfister forms $\varphi = \langle\langle a_1,..., a_n \rangle\rangle$ and $\varphi' = \langle\langle a_1',..., a_n' \rangle\rangle$ are said to be simply P-equivalent if there exist two indices $i, j$ such that $\langle\langle a_i, a_j \rangle\rangle \cong \langle\langle a_i', a_j' \rangle\rangle$ and $a_k = a_k'$ for $k \neq i, j$. More generally, they are said to be chain P-equivalent, written $\varphi \approxeq \varphi'$, if there exists a sequence of Pfister forms $\varphi_0, \varphi_1,..., \varphi_{m-1}, \varphi_m$ such that $\varphi = \varphi_0, \varphi' = \varphi_m$ and $\varphi_i$ is simply P-equivalent to $\varphi_{i+1}, 0 \leq i \leq m - 1$.

Clearly $\varphi \approxeq \varphi'$ implies $\varphi \cong \varphi'$. The converse statement was proven by R. Elman and T. Y. Lam in \cite{ELPFandtheKofF} and is called the Chain P-equivalence Theorem. We recall the statement here, for use in proposition \ref{birationalsubforms}.

\begin{theorem}\label{chainP-equivalence}(Chain P-equivalence Theorem) Let $\varphi$ and $\varphi'$ be $n$-fold Pfister forms. Then $\varphi \cong \varphi'$ if and only if $\varphi \approxeq \varphi'$.
\end{theorem}

\dfn Two quadratic forms $\varphi$ and $\varphi'$ are said to be birationally equivalent if the quadrics they define are birationally isomorphic, i.e. if the function fields $k(Z(\varphi))$ and $k(Z(\varphi'))$ are isomorphic.

We begin with a lemma about two equivalent Pfister forms and the matrix that connects them.

\begin{lemma} \label{Cnmatrix} If $\varphi_{n-1}$ and $\varphi_n = \langle 1, -b \rangle \varphi_{n-1}$ are Pfister forms with matrices $A_{\varphi_{n-1}}$ and $A_{\varphi_n}$ and $c = \varphi_n(x_1,..., x_{2^n})$ then $\varphi_n \cong \langle c \rangle \varphi_n$ via a matrix $C_n \in GL_{2^n}(k(x_1,..., x_{2^n}))$, that is $C_nA_{\varphi_n}C_n^t = cA_{\varphi_n}$, which satisfies 2 properties:
\begin{enumerate}
\item $C_n^{-1} = \frac {C_n}{c}$, hence $(C^t_n)^{-1} = \frac {C^t_n}{c}$ as well.
\item first row and first column of $C_n$ are $(x_1\, ... \, x_{2^n})$ and $A_{\varphi_n}(x_1\, ... \, x_{2^n})^t$.
\end{enumerate}
\end{lemma}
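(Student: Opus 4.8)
The plan is to prove the lemma by induction on $n$; this is essentially the statement that Pfister forms are round (multiplicative), and $C_{n}$ is the explicit similarity realizing the fact that $c = \varphi_{n}(x) \in D_{k(x_{i})}(\varphi_{n})$ is a similarity factor of $\varphi_{n}$, built recursively from the matrix attached to $\varphi_{n-1}$. First I would reformulate the two required properties to keep the bookkeeping light. Since $A_{\varphi_{n}}$ is diagonal with first diagonal entry $1$, one checks readily that the pair of conditions ``$C_{n}^{2}=cI$ and $C_{n}A_{\varphi_{n}}C_{n}^{t}=cA_{\varphi_{n}}$'' is equivalent to the pair ``$C_{n}^{2}=cI$ and $A_{\varphi_{n}}C_{n}^{t}=C_{n}A_{\varphi_{n}}$'' (that is, $C_{n}$ is self-adjoint for $A_{\varphi_{n}}$); and once $C_{n}$ is self-adjoint with first row $(x_{1},\dots,x_{2^{n}})$, the entrywise relation $\lambda_{1}(C_{n})_{i1}=\lambda_{i}(C_{n})_{1i}$, where $A_{\varphi_{n}}=\mathrm{diag}(\lambda_{1},\dots,\lambda_{2^{n}})$ and $\lambda_{1}=1$, forces the first column to be $A_{\varphi_{n}}(x_{1},\dots,x_{2^{n}})^{t}$. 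So it suffices to produce an $A_{\varphi_{n}}$-self-adjoint matrix $C_{n}\in GL_{2^{n}}(k(x_{i}))$ with $C_{n}^{2}=cI$ and first row $(x_{1},\dots,x_{2^{n}})$; property (1) is then immediate, $c$ being a nonzero element of $k(x_{i})$ so that $C_{n}$ is automatically invertible.

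For the base case $n=1$ I would take $\varphi_{1}=\langle 1,-b\rangle$, $c=x_{1}^{2}-bx_{2}^{2}$, and
$$C_{1}=\begin{pmatrix} x_{1} & x_{2} \\ -bx_{2} & -x_{1} \end{pmatrix},$$
for which $C_{1}^{2}=cI$ and self-adjointness for $A_{\varphi_{1}}=\mathrm{diag}(1,-b)$ are a one-line check. For the inductive step ($n\ge 2$) write $\varphi_{n}=\langle 1,-b\rangle\varphi_{n-1}$, so $A_{\varphi_{n}}=\mathrm{diag}(A_{\varphi_{n-1}},-bA_{\varphi_{n-1}})$; split the indeterminates as $y=(x_{1},\dots,x_{2^{n-1}})$ and $z=(x_{2^{n-1}+1},\dots,x_{2^{n}})$, and put $p=\varphi_{n-1}(y)$, $q=\varphi_{n-1}(z)$, so that $c=p-bq$ and $q\neq 0$ in $k(x_{i})$. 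Applying the inductive hypothesis to $\varphi_{n-1}$ first in the variables $y$ and then in the variables $z$, let $P,Q$ be the matrices obtained: $P^{2}=pI$, $Q^{2}=qI$, both self-adjoint for $A_{\varphi_{n-1}}$, with first rows $y$ and $z$ respectively. I would then set
$$C_{n}=\begin{pmatrix} P & Q \\ -bQ & -\tfrac{1}{q}\,QPQ \end{pmatrix}.$$

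Now the verification: the first row of $C_{n}$ is $(y,z)=(x_{1},\dots,x_{2^{n}})$ by construction; self-adjointness of $C_{n}$ for $A_{\varphi_{n}}$ reduces block by block to self-adjointness of $P$ and $Q$ for $A_{\varphi_{n-1}}$ (using that $A_{\varphi_{n-1}}$ is symmetric, so $P^{t}=A_{\varphi_{n-1}}^{-1}PA_{\varphi_{n-1}}$ and likewise for $Q$); and $C_{n}^{2}=cI$ unwinds into the four $2\times 2$ block equations, each collapsing after substituting $P^{2}=pI$ and $Q^{2}=qI$. For instance the $(1,1)$ block is $P^{2}-bQ^{2}=(p-bq)I$, the $(2,2)$ block is $-bQ^{2}+q^{-2}(QPQ)^{2}=-bqI+q^{-1}QP^{2}Q=-bqI+pI$, and the off-diagonal blocks vanish since $PQ+Q(-q^{-1}QPQ)=PQ-PQ=0$ and $-bQP+(-q^{-1}QPQ)(-bQ)=-bQP+bQP=0$. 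This finishes the induction.

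The calculations are all mechanical; the real content is choosing the shape of $C_{n}$ correctly. The point I expect to be the main (though still modest) obstacle is recognizing that the lower-right block must be $-q^{-1}QPQ$ rather than the naive $-P$: since $P$ and $Q$ do not commute, the block matrix with $-P$ in that position fails $C_{n}^{2}=cI$ by precisely the commutator $PQ-QP$, and $-q^{-1}QPQ$ is the unique replacement forced by the $(1,2)$ block equation together with self-adjointness. Once that block is identified, everything else is substitution of the inductive identities $P^{2}=pI$ and $Q^{2}=qI$.
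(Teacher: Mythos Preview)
Your proof is correct and follows the same inductive construction as the paper: the matrix you write down,
\[
C_{n}=\begin{pmatrix} P & Q \\ -bQ & -\tfrac{1}{q}\,QPQ \end{pmatrix},
\]
is exactly the paper's $C_{n}=\left(\begin{smallmatrix} C & C' \\ -bC' & -C'CC'/t \end{smallmatrix}\right)$ under the dictionary $P=C$, $Q=C'$, $p=s$, $q=t$. The only difference is in the verification: the paper \emph{derives} this $C_{n}$ as a product of three explicit congruences (from $\varphi_{n}\cong\langle s,-bt\rangle\varphi_{n-1}\cong\langle c,-cbst\rangle\varphi_{n-1}\cong\langle c\rangle\varphi_{n}$) and then checks $C_{n}A_{\varphi_{n}}C_{n}^{t}=cA_{\varphi_{n}}$ by a direct block computation, whereas you posit the block form and verify the equivalent pair ``$C_{n}$ is $A_{\varphi_{n}}$-self-adjoint and $C_{n}^{2}=cI$''. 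Your reformulation is a clean shortcut---it turns the check into four short identities each collapsing via $P^{2}=pI$, $Q^{2}=qI$---but the underlying argument and the resulting matrix are identical to the paper's.
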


\begin{proof} We induce on $n$. For $n = 1$ and $c = x_1^2 - ax_2^2$, we have $\varphi_1 \cong c\varphi_1$ via
$C_1 = \left(\begin{array}{cc} x_1 & x_2 \\ -ax_2 & -x_1 \end{array}\right)$ which satisfies (1) and (2).

Next, write $A_{\varphi_n} = \left(\begin{array}{cc} A_{\varphi_{n-1}} & 0 \\ 0 & -bA_{\varphi_{n-1}}\end{array}\right)$ then $c = \varphi_n(x_1,..., x_{2^n}) = xA_{\varphi_n}x^t = s - bt \in D_k(\varphi_n)$ where $s = \varphi_{n-1}(x_1,..., x_{2^{n-1}})$ and $t = \varphi_{n-1}(x_{2^{n-1} + 1},..., x_{2^n})$ are in $D_k(\varphi_{n-1})$. By induction $\varphi_{n-1} \cong \langle s \rangle \varphi_{n-1}$ via a matrix $C \in GL_{2^{n-1}}(k(x_1,..., x_{2^{n-1}}))$, that is $CA{\varphi_{n-1}}C^t = sA_{\varphi_{n-1}}$, which satisfies 
\begin{enumerate}
\item $C^{-1} = \frac{C}{s}$, hence $(C^t)^{-1} = \frac {C^t}{s}$.
\item first row and first column of $C$ are $(x_1\, ... \, x_{2^{n-1}})$ and $A_{\varphi_{n-1}}(x_1\, ... \, x_{2^{n-1}})^t$.
\end{enumerate}

Similarly, $\varphi_{n-1} \cong \langle t \rangle \varphi_{n-1}$ via $C' \in GL_{2^{n-1}}(F(x_{2^{n-1} + 1},..., x_{2^n}))$ with the same properties. From this, we have
\begin{enumerate}[(i)]
\item\label{casei} $\varphi_n \cong \langle s \rangle \varphi_{n-1} \perp \langle -b \rangle \langle t \rangle \varphi_{n-1} = \langle s, -bt \rangle \varphi_{n-1}$ with
$$\left(\begin{array}{cc} C & 0 \\ 0 & C' \end{array}\right)
\left(\begin{array}{cc} A_{\varphi_{n-1}} & 0 \\ 0 & -bA_{\varphi_{n-1}} \end{array}\right)
\left(\begin{array}{cc} C^t & 0 \\ 0 & C'^t \end{array}\right)
= \left(\begin{array}{cc} sA_{\varphi_{n-1}} & 0 \\ 0 & -btA_{\varphi_{n-1}} \end{array}\right)$$

\item\label{caseii} $\langle s, -bt \rangle \varphi_{n-1} \cong \langle c, -cbst \rangle \varphi_{n-1}$ with
$$\left(\begin{array}{cc} I & I \\ btI & sI \end{array}\right)
\left(\begin{array}{cc} sA_{\varphi_{n-1}} & 0 \\ 0 & -btA_{\varphi_{n-1}} \end{array}\right)
\left(\begin{array}{cc} I & btI \\ I & sI \end{array}\right)
= \left(\begin{array}{cc} cA_{\varphi_{n-1}} & 0 \\ 0 & -cbstA_{\varphi_{n-1}} \end{array}\right)$$

\item\label{caseiii} Let $D = (CC')^{-1} = C'^{-1}C^{-1} = \frac{C'C}{ts}$ then $\langle c, -cbst \rangle \varphi_{n-1} \cong \langle c, -cb \rangle \varphi_{n-1} = \langle c \rangle \varphi_n$ with
\begin{align*}
\left(\begin{array}{cc} I & 0 \\ 0 & D \end{array}\right)
\left(\begin{array}{cc} cA_{\varphi_{n-1}} & 0 \\ 0 & -cbstA_{\varphi_{n-1}} \end{array}\right)
\left(\begin{array}{cc} I & 0 \\ 0 & D^t \end{array}\right)
& = \left(\begin{array}{cc} cA_{\varphi_{n-1}} & 0 \\ 0 & -cbstDA_{\varphi_{n-1}} \end{array}\right)
\left(\begin{array}{cc} I & 0 \\ 0 & D^t \end{array}\right) \\
& = \left(\begin{array}{cc} cA_{\varphi_{n-1}} & 0 \\ 0 & -cbstDA_{\varphi_{n-1}}D^t \end{array}\right) \\
& = \left(\begin{array}{cc} cA_{\varphi_{n-1}} & 0 \\ 0 & -cbst \frac {A_{\varphi_{n-1}}}{st} \end{array}\right) \\
& = \left(\begin{array}{cc} cA_{\varphi_{n-1}} & 0 \\ 0 & -cbA_{\varphi_{n-1}} \end{array}\right)
\end{align*}

\item\label{caseiv} Putting (\ref{casei}), (\ref{caseii}) and (\ref{caseiii}) together, we get $\varphi_n \cong \langle s \rangle \varphi_{n-1} \perp \langle -b \rangle \langle t \rangle \varphi_{n-1} = \langle s, -bt \rangle \varphi_{n-1} \cong \langle c, -cbst \rangle \varphi_{n-1} \cong \langle c, -cb \rangle \varphi_{n-1} = \langle c \rangle \varphi_n$
via $C'_n$ where
\begin{align*}
C'_n
& = \left(\begin{array}{cc} I & 0 \\ 0 & D \end{array}\right)
\left(\begin{array}{cc} I & I \\ btI & sI \end{array}\right)
\left(\begin{array}{cc} C & 0 \\ 0 & C'  \end{array}\right) \\
& = \left(\begin{array}{cc} I & I \\ btD & sD \end{array}\right)
\left(\begin{array}{cc} C & 0 \\ 0 & C' \end{array}\right) \\
& = \left(\begin{array}{cc} C & C' \\ btC'^{-1}C^{-1}C & sC'^{-1}C^{-1}C' \end{array}\right) \\
& = \left(\begin{array}{cc} C & C' \\ bC' & \frac{C'CC'}{t} \end{array}\right)
\end{align*}
\end{enumerate}

At last, let $C_n = \left(\begin{array}{cc} I & 0 \\ 0 & -I \end{array}\right)C'_n = \left(\begin{array}{cc} C & C' \\ -bC' & - \frac{C'CC'}{t} \end{array}\right)$ then its inverse $C_n^{-1} = \frac {C_n}{c}$, its first row and column are $(x_1\, ... \, x_{2^n})$ and $A_{\varphi_n}(x_1\, ... \, x_{2^n})^t$, and
\begin{align*}
C_nA_{\varphi_n}C_n^t
& = \left(\begin{array}{cc} I & 0 \\ 0 & -I \end{array}\right)
C'_nA_{\varphi_n}C_n^{'t}
\left(\begin{array}{cc} I & 0 \\ 0 & -I \end{array}\right) \\
& = cA_{\varphi_n}
\end{align*}

The last equality can be verified directly,
\begin{align*}
C_nA_{\varphi_n}C_n^t
& = \left(\begin{array}{cc} C & C' \\ -bC' & - \frac{C'CC'}{t} \end{array}\right)
\left(\begin{array}{cc} A_{\varphi_{n-1}} & 0 \\ 0 & -bA_{\varphi_{n-1}} \end{array}\right)
\left(\begin{array}{cc} C^t & -bC'^t \\ C'^t & - \frac{C'^tC^tC'^t}{t} \end{array}\right) \\
& = \left(\begin{array}{cc} CA_{\varphi_{n-1}} & -bC'A_{\varphi_{n-1}} \\ -bC'A_{\varphi_{n-1}} & \frac{b}{t}C'CC'A_{\varphi_{n-1}} \end{array}\right)
\left(\begin{array}{cc} C^t & -bC'^t \\ C'^t & - \frac{C'^tC^tC'^t}{t} \end{array}\right) \\
& = \left(\begin{array}{cc} CA_{\varphi_{n-1}}C^t - bC'A_{\varphi_{n-1}}C'^t & -bCA_{\varphi_{n-1}}C'^t + \frac{b}{t}C'A_{\varphi_{n-1}}C'^tC^tC'^t \\ -bC'A_{\varphi_{n-1}}C^t + \frac{b}{t}C'CC'A_{\varphi_{n-1}}C'^t & b^2C'A_{\varphi_{n-1}}C'^t - \frac{b}{t^2}C'CC'A_{\varphi_{n-1}}C'^tC^tC'^t \end{array}\right) \\
& = \left(\begin{array}{cc} sA_{\varphi_{n-1}} - btA_{\varphi_{n-1}} & -bCA_{\varphi_{n-1}}C'^t + bA_{\varphi_{n-1}}C^tC'^t \\ -bC'A_{\varphi_{n-1}}C^t + bC'CA_{\varphi_{n-1}} & b^2tA_{\varphi_{n-1}} - bsA_{\varphi_{n-1}} \end{array}\right) \\
& = \left(\begin{array}{cc} cA_{\varphi_{n-1}} & -bCA_{\varphi_{n-1}}C'^t + \frac{b}{s}CA_{\varphi_{n-1}}C^tC^tC'^t \\ -bC'A_{\varphi_{n-1}}C^t + \frac{b}{s}C'CCA_{\varphi_{n-1}}C^t & -bcA_{\varphi_{n-1}} \end{array}\right) \\
& = \left(\begin{array}{cc} cA_{\varphi_{n-1}} & -bCA_{\varphi_{n-1}}C'^t + bCA_{\varphi_{n-1}}C'^t \\ -bC'A_{\varphi_{n-1}}C^t + bC'A_{\varphi_{n-1}}C^t & -bcA_{\varphi_{n-1}} \end{array}\right) \\
& = \left(\begin{array}{cc} cA_{\varphi_{n-1}} & 0 \\ 0 & -bcA_{\varphi_{n-1}}\end{array}\right) \\
& = cA_{\varphi_n}
\end{align*}
\end{proof}

This next lemma is needed to show that the residue field in theorem \ref{varietytoquadric} stays the same.

\begin{lemma} \label{subCnmatrix} The $n \times n$ matrix
$$M= \left(\begin{array}{cccc}
a_1b_1 & a_1b_2 & . \, . & a_1b_n \\
a_2b_1 & a_2b_2 & . \, . & a_2b_n \\
: & : & . & : \\
a_nb_1 & a_nb_2 & . \, . & a_nb_n \end{array}\right)$$
has characteristic polynomial $\text{char}_M(t) = t^{n-1}(t - a_1b_1 - a_2b_2 - ... - a_nb_n)$.
\end{lemma}

\begin{proof} We consider what $M$ does to the standard basis,
\begin{align*}
k^n & \rTo^M k^n \\
(1, 0,..., 0) & \mapsto b_1(a_1,..., a_n) \\
(0, 1,..., 0) & \mapsto b_2(a_1,..., a_n) \\
& : \\
(0, 0,..., 1) & \mapsto b_n(a_1,..., a_n)
\end{align*}

Thus $M$ sends the vector $(a_1,..., a_n)$ to $\alpha(a_1,..., a_n)$ where $\alpha = a_1b_1 + a_2b_2 + ... + a_nb_n$. Letting $v_1 = (a_1,..., a_n)$, we choose a new basis $\{v_1,..., v_n\}$ for $k^n$ such that $ker(M) = \langle v_2,..., v_n \rangle$ and again look at what M does as a linear map,
\begin{align*}
k^n & \rTo^M k^n \\
v_1 & \mapsto (\alpha, 0,..., 0) \\
v_2 & \mapsto (0,..., 0) \\
& : \\
v_n & \mapsto (0,..., 0)
\end{align*}

Therefore M has canonical form
$$\left(\begin{array}{cccc} \alpha & 0 & . \, . & 0 \\
0 & 0 & . \, . & 0 \\
: & : & . & : \\
0 & 0 & . \, . & 0 \end{array}\right)$$
and $\det(tI - M) = \text{char}_M(t) = t^n - \alpha t^{n-1} = t^{n-1}(t - a_1b_1 - a_2b_2 - ... - a_nb_n)$.
\end{proof}

We are now ready to turn the standard norm varieties into Pfister quadrics defined by subforms of Pfister forms.

\begin{theorem} \label{varietytoquadric} The standard norm variety $X(a_1,..., a_n)$ for $\{a_1,..., a_n\}$ is birationally isomorphic to the Pfister quadric $Z(\psi_n) \subset \mathbb{P}^{2^{n-1}}_k$ defined by the subform $\psi_n = \langle\langle a_1,..., a_{n-1} \rangle\rangle \perp \langle -a_n \rangle$ of the Pfister form $\varphi_n = \langle\langle a_1,..., a_n \rangle\rangle$.
\end{theorem}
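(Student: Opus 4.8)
The plan is to analyze the symmetric power construction explicitly in the case $p=2$ and identify the hypersurface $W$ (equivalently, the generic fiber of the finite map $p$) with an affine piece of the Pfister quadric $Z(\psi_n)$. We work by induction on $n$, using Remark \ref{reductiontobirational} to pass freely between $W$, $V$, and the projective model $X'$, and using Remark \ref{startofconstruction} to anchor the induction. For $n=1$, the construction gives $X(a_1) = \operatorname{Spec}(k(\sqrt{a_1}))$, which is exactly the (zero-dimensional) quadric defined by $\langle 1, -a_1\rangle$, matching $\psi_1 = \langle 1\rangle \perp \langle -a_1\rangle$; this is the base case.

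For the inductive step, suppose $X(a_1,\dots,a_{n-1})$ is birationally isomorphic to $Z(\psi_{n-1})$, so that $k(X(a_1,\dots,a_{n-1}))$ is the function field $k(\psi_{n-1})$ of that quadric. In the construction, $X = X(a_1,\dots,a_{n-1})$ and we form the rank-$p=2$ \'etale algebra $\mathcal A = p_*(\mathcal O_{X\times S^{p-1}(X)}|_U)$ on $U$; for $p=2$, $S^{p-1}(X) = X$ and $S^2(X) = \operatorname{Sym}^2 X$, and the degree-$2$ cover $p^{-1}(U)\to U$ gives $\mathcal A$ as a quadratic \'etale algebra whose norm form $N$ is, generically, a binary quadratic form of discriminant controlled by the class of $X(a_1,\dots,a_{n-1})$ — concretely, over the generic point, $\mathcal A$ is the quadratic extension obtained from the two points of $X\times S^{p-1}(X)$ lying over a generic point of $U$, and its norm is represented by $\psi_{n-1}$ evaluated suitably. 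The hypersurface $W = \{N - a_n = 0\}$ then becomes the equation "$\psi_{n-1}(\text{something}) = a_n \cdot(\text{norm of a denominator})$", which after clearing denominators and homogenizing is precisely the equation $\langle\langle a_1,\dots,a_{n-1}\rangle\rangle \perp \langle -a_n\rangle = 0$, i.e. the Pfister quadric $Z(\psi_n) \hookrightarrow \mathbb P^{2^{n-1}}_k$. Here Lemma \ref{subCnmatrix} is used to put the relevant rank-one "multiplication-by-a-norm-element" matrix into canonical form so that the norm map on $\mathcal A$, as a quadratic form over $k(U) = k(\psi_{n-1})$, is visibly $\langle 1, -c\rangle$ for the generic value $c$, and Lemma \ref{Cnmatrix} supplies the similarity $\varphi_{n-1}\cong \langle c\rangle\varphi_{n-1}$ that lets us recognize the total space, over the generic point of $U$, as cut out by $\psi_{n-1} \perp \langle -a_n\rangle \cong \langle\langle a_1,\dots,a_{n-1}\rangle\rangle \perp \langle -a_n\rangle = \psi_n$.

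The key computation, then, is a dimension and birational-model bookkeeping: $U$ has dimension $\dim X(a_1,\dots,a_{n-1}) = p^{n-2}-1 = 2^{n-2}-1$, the vector bundle $V\to U$ of rank $p=2$ has dimension $2^{n-2}+1$, and the hypersurface $W\subset V$ has dimension $2^{n-2}$, which must match $\dim Z(\psi_n) = 2^{n-1}-1$. Wait — these do not obviously agree, so part of the work is to see that passing to the generic fiber of $p$ (as in Remark \ref{reductiontobirational}) and then to the projective closure $X'$ supplies the extra directions: the residue field $k(p^{-1}(\eta))$ of the generic fiber, a quadratic extension of $k(\eta) = k(\psi_{n-1})$, when combined with the equation $N = a_n$, yields a variety over $k$ whose function field is $k(\psi_{n-1})(\sqrt{c})$ twisted by $N=a_n$, and one identifies this with $k(\psi_n)$ via the similarity of Lemma \ref{Cnmatrix}. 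The main obstacle I anticipate is exactly this identification of function fields: carefully tracking how the norm form of the \'etale algebra $\mathcal A$ at the generic point of $U$ relates to $\psi_{n-1}$ (which requires knowing the "discriminant" of the degree-$2$ cover in terms of the Pfister data, i.e., that the two sheets over $U$ are swapped by $\sqrt{\langle\langle a_1,\dots,a_{n-1}\rangle\rangle\text{-value}}$), and then showing that adjoining a solution of $N = a_n$ produces precisely the function field of $Z(\psi_n)$ and not some other quadratic twist. Everything else — smoothness, geometric irreducibility, the projective embedding — is quoted from \cite{Joukhovitski} and standard quadratic form theory \cite{Lam}.
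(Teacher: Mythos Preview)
Your overall strategy---induction on $n$, compute the norm of the rank-$2$ \'etale algebra $\mathcal{A}$ over $U$, then recognize $W=Z(N-a_n)$ as the next Pfister quadric---is the paper's. But there is a real gap in the execution, and your dimension worry is the symptom. You write $\dim U=\dim X(a_1,\dots,a_{n-1})=2^{n-2}-1$ and $k(\eta)=k(\psi_{n-1})$; both are wrong, because $U$ is open in $S^{2}(X)$, not in $X$. Hence $\dim U=2\dim X=2^{n-1}-2$, so $\dim V=2^{n-1}$ and $\dim W=2^{n-1}-1=\dim Z(\psi_n)$: there is no mismatch. Correspondingly $k(U)$ is the function field of the symmetric square, not of the quadric itself, and your identification of the quadratic extension as living over $k(\psi_{n-1})$ cannot be made to work.

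The missing idea is a concrete model for $k(U)$ and for the discriminant of the double cover. The paper gets it from the secant map: for the quadric $X=Z(\psi_{n-1})\subset\mathbb{P}^{2^{n-2}}$, an unordered pair of distinct points spans a line, giving a birational map $(X^{2}\setminus\Delta)/S_{2}\to Gr(2,\mathbb{A}^{2^{n-2}+1})$; the fiber over a generic $2$-plane $\langle u,v\rangle$ is the binary conic $\psi_{n-1}|_{\langle u,v\rangle}$. Thus $k(U)$ is purely transcendental in coordinates $(x_i,y_j)$ of $u,v$, and the quadratic extension $k(p^{-1}(U))/k(U)$ is $k(x_i,y_j)(\sqrt{-\theta})$ with $\theta=\psi_{n-1}(u)\psi_{n-1}(v)-b(u,v)^{2}$. \emph{This} is where Lemmas~\ref{Cnmatrix} and~\ref{subCnmatrix} actually bite: writing $\psi_{n-1}=\langle 1,-a_{n-1}\rangle\perp\varphi_{n-2}'$, the product term $\varphi_{n-2}(1,x)\varphi_{n-2}(0,y)$ inside $\theta$ is turned by the matrix $C$ of Lemma~\ref{Cnmatrix} into a single value $\varphi_{n-2}((0,y)C)$, whose first coordinate equals $b(u,v)$ and cancels the $b(u,v)^{2}$ term; Lemma~\ref{subCnmatrix} then computes the determinant of the remaining submatrix $M$ so that the substitution $z=yM$ is invertible over $k(x_i)$ and the residue field is unchanged. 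After this, $\theta=-a_{n-1}\varphi_{n-2}(1,x)+\varphi_{n-2}'(z)$ and the norm $N(m+n\sqrt{-\theta})=m^{2}+n^{2}\theta$ is visibly a generic value of the full Pfister form $\varphi_{n-1}$ in $2^{n-1}$ variables over $k$ (not of $\psi_{n-1}$, as you wrote), whence $Z(N-a_n)\approx Z(\varphi_{n-1}\perp\langle -a_n\rangle)=Z(\psi_n)$. Without the secant-line/Grassmannian model you have no formula for $\theta$, and the sketch ``the norm is represented by $\psi_{n-1}$ evaluated suitably'' cannot be completed.
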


\begin{proof} We induce on $n$. First we verify the case $n = 2$. As described in remark \ref{startofconstruction}, we begin our symmetric power construction with $X(a_1) = Spec(L)$ where $L = k(\sqrt{a_1})$ and get
\begin{diagram}
Spec(L) \times Spec(L) & \lTo & p^{-1}(U) \cong Spec(L) \\
\dTo^p & & \dTo^{p|_{p^{-1}(U)}} \\
S^2(Spec(L)) & \lTo & U \cong Spec(k) \\
\end{diagram}

Hence $X(a_1, a_2) = Z(N_{L/k } - a_2) = Z(x_1^2 - a_1x_2^2 - a_2)$ the hypersurface defined by the equation $N_{L/k } - a_2 = x_1^2 - a_1x_2^2 - a_2 = 0$. Projectivization then gives $X(a_1, a_2) = Z(x_1^2 - a_1x_2^2 - a_2x_3^2) = Z(\psi_2) \subset \mathbb{P}^2_k$ as required.

By induction $X(a_1,..., a_{n+1}) \approx Z(\psi_{n+1})$. Write $\psi = \psi_{n+1} = \varphi_n \perp \langle -a_{n+1} \rangle = \langle 1 \rangle \perp \varphi' \perp \langle -a_{n+1} \rangle \cong \langle 1, -a_{n+1} \rangle \perp \varphi'$ where $\varphi'$ is the pure subform of $\varphi$. From construction we get
$$(X_{n+1} \times X_{n+1}) \setminus \bigtriangleup \longrightarrow ((X_{n+1} \times X_{n+1}) \setminus \bigtriangleup)/ _{S_2} \longrightarrow Gr(2, \mathbb{A}^{2^n + 1}_k)$$

Let $U = \langle u, v \rangle = \langle (1, 0, x_2,..., x_{2^n}), (0, 1, y_2,..., y_{2^n}) \rangle$ be the generic plane in $\mathbb{A}^{2^n + 1}_k$ and moreover let $\{u, v\}$ be a basis for $U$. Over this basis, the restriction $\psi_{k(x_i, y_i)}|_U$ has matrix form
$$\left(\begin{array}{cc} \psi(u) & b(u, v) \\ b(u, v) & \psi(v) \end{array}\right)$$
where
\begin{align*}
U \times U & \rTo^{b} k \\
(u', v') & \mapsto \frac{1}{2}(\psi(u' + v') - \psi(u') - \psi(v'))
\end{align*}
is the symmetric bilinear form associated to $\psi_{k(x_i, y_i)}|_U$.

The generic fiber is then the point $(r, s) \in U$ such that
\begin{align*}
\psi(r, s) & = \psi(u, u)r^2 + 2b(u, v)rs + \psi(v, v)s^2 \\
& = 0
\end{align*}
with residue field
$$qf \left(k(x_i, y_i)\left[\frac{r}{s}\right] / \left(\psi(u, u)\left(\frac{r}{s}\right)^2 + 2b(u, v)\frac{r}{s} + \psi(v, v)\right)\right) = k(x_i, y_j)(\sqrt{-\theta})$$
where
\begin{align*}
\theta & = \psi(u)\psi(v) - b(u, v)^2 \\
& = (1+ \varphi'(x_2,..., x_{2^n}))(-a_{n+1} + \varphi'(y_2,..., y_{2^n})) - b(u, v)^2 \\
& = (\varphi(1, x_2,..., x_{2^n}))(-a_{n+1} + \varphi'(y_2,..., y_{2^n})) - b(u, v)^2 \\
& = (-a_{n+1})\varphi(1, x_2,..., x_{2^n}) + \varphi(1, x_2,..., x_{2^n})\varphi(0, y_2,..., y_{2^n}) - b(u, v)^2
\end{align*}

If we write $\varphi = \langle 1, c_2,..., c_{2^n} \rangle $ then by lemma \ref{Cnmatrix}, there exists a matrix
$$C_n = \left(\begin{array}{cccc}
1 & x_2 & . \, . & x_{2^n} \\
c_2x_2 & . & . \, . & . \\
 : & : & . & : \\
c_{2^n}x_{2^n} & . & . \, . & . \end{array}\right)$$
such that $\varphi(1, x_2,..., x_{2^n})\varphi(0, y_2,..., y_{2^n}) = \varphi((0, y_2,..., y_{2^n})C_n)$. So
\begin{align*}
\theta & = (-a_{n+1})\varphi(1, x_2,..., x_{2^n}) + \varphi((0, y_2,..., y_{2^n})C_n) - b(u, v)^2 \\
& = (-a_{n+1})\varphi(1, x_2,..., x_{2^n}) + \varphi((0, y_2,..., y_{2^n})A_{\varphi}(1, x_2,..., x_{2^n})^t, z_2,..., z_{2^n}) \\
&\hspace{0.5cm} - ((y_2,..., y_{2^n})A_{\varphi'}(x_2,..., x_{2^n})^t)^2 \\
& = (-a_{n+1})\varphi(1, x_2,..., x_{2^n}) + \varphi((y_2,..., y_{2^n})A_{\varphi'}(x_2,..., x_{2^n})^t, z_2,..., z_{2^n}) \\
&\hspace{0.5cm} - ((y_2,..., y_{2^n})A_{\varphi'}(x_2,..., x_{2^n})^t)^2 \\
& = (-a_{n+1})\varphi(1, x_2,..., x_{2^n}) + \varphi'(z_2,..., z_{2^n})
\end{align*}

Above, we let $(z_1, z_2,..., z_{2^n}) = (0, y_2,..., y_{2^n})C_n$, which means $(z_2,..., z_{2^n}) = (y_2,..., y_{2^n})M$ where $M$ is $C_n$ without its first row and column. Since $C_n^2 = \varphi(1, x_2,..., x_{2^n})I$, it follows $M^2 = \varphi(1, x_2,..., x_n)I - (c_ix_ix_j), 2 \leq i, j \leq 2^n$. By lemma \ref{subCnmatrix}, $\det(M^2) = \varphi(1, x_2,..., x_{2^n})^{2^n - 2}$. Thus $\det(M) = \varphi(1, x_2,..., x_{2^n})^{2^{n-1} - 1}$ and $M \in GL_{2^n - 1}(F(x_2,..., x_{2^n}))$. So the residue field stays the same
$$F(x_i, y_j)(\sqrt{-\theta}) = F(x_i, z_j)(\sqrt{-\theta})$$

It has quadratic norm
\begin{align*}
N(m+n \sqrt{-\theta}) & = m^2 - a_{n+1}\varphi(1, x_2,..., x_{2^n})n^2 + \varphi'(z_2,..., z_{2^n})n^2 \\
& = \varphi (m, nz_2,..., nz_{2^n}) - a_{n+1}\varphi(n, nx_2,..., nx_{2^n}) \\
& = \langle 1, -a_{n+1} \rangle \varphi (m, nz_2,..., nz_{2^n}, n, nx_2,..., nx_{2^n}) \\
& = \varphi_{n+1}(t_1,..., t_{2^{n+1}})
\end{align*}

Therefore our projectivized $X(a_1,..., a_{n+2}) = Z(N - a_{n+2}t_{2^{n+1} + 1}^2)$ is birationally isomorphic to $Z(\varphi_{n+1} \perp \langle -a_{n+2} \rangle) = Z(\psi_{n+2}) \subset \mathbb{P}^{2^{n+1}}_k$ as wanted.
\end{proof}

Next, we show that interchanging $a_i$ and $a_j$ or multiplying $a_i$ by any nonzero norm $N_{k(\sqrt{a_j})/k}(u)$ in the symbol $\{a_1,..., a_n\}$ does not change its standard norm variety. For this, we need two more lemmas about Pfister neighbors, the first one we will use toward our corollary \ref{p=2corollary} and the second one we will use toward our example \ref{p=2example}.

\begin{lemma} \label{interchange4forms} If $\varphi = \langle\langle a_1,..., a_n \rangle\rangle$ is an anisotropic Pfister form then the two forms $\varphi \perp \langle -b\varphi \rangle \perp \langle - c \rangle$ and $\varphi \perp \langle - c\varphi \rangle \perp \langle -b \rangle$ are birationally equivalent.
\end{lemma}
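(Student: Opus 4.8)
The plan is to exhibit an explicit birational isomorphism between the projective quadrics defined by the two forms, built from the matrices $C_n$ of Lemma~\ref{Cnmatrix}. Reading $-b\varphi$ as the form $\langle -b\rangle\otimes\varphi$, both forms have dimension $2^{n+1}+1$; writing a vector as a triple $(x,y,z)$ with $x,y\in k^{2^n}$ (row vectors) and $z$ a scalar, the quadric $Z_1$ of $\varphi\perp(-b\varphi)\perp\langle -c\rangle$ is $\{\varphi(x)-b\varphi(y)-cz^2=0\}$ and the quadric $Z_2$ of $\varphi\perp(-c\varphi)\perp\langle -b\rangle$ is $\{\varphi(x)-c\varphi(y)-bz^2=0\}$. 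Since $\varphi\perp(-b\varphi)=\langle\langle a_1,\dots,a_n,b\rangle\rangle$ and $\varphi\perp(-c\varphi)=\langle\langle a_1,\dots,a_n,c\rangle\rangle$, both $Z_1$ and $Z_2$ are Pfister neighbours of the $(n+2)$-fold Pfister form $\langle\langle a_1,\dots,a_n,b,c\rangle\rangle$ of the same dimension; this is the structural reason to expect a birational isomorphism, but it is cleaner to write one down.

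Apply Lemma~\ref{Cnmatrix} to the Pfister form $\varphi=\varphi_n$: for a vector $y$ it produces a matrix $C(y)\in GL_{2^n}(k(y_i))$, which the construction shows to be homogeneous of degree $1$ in $y$, with $\varphi(x\,C(y))=\varphi(y)\varphi(x)$ for all $x$ and with $C(y)^2=\varphi(y)I$ (property~(1)). The key step is the rational map
$$F\colon Z_1\dashrightarrow Z_2,\qquad (x,y,z)\longmapsto\bigl(x\,C(y),\ zy,\ \varphi(y)\bigr),$$
defined on the dense open locus $\varphi(y)\neq 0$: multiplying the equation of $Z_1$ by $\varphi(y)$ and using $\varphi(x\,C(y))=\varphi(y)\varphi(x)$ and $\varphi(zy)=z^2\varphi(y)$ converts it to $\varphi(x\,C(y))-c\,\varphi(zy)-b\,\varphi(y)^2=0$, which is exactly the defining equation of $Z_2$ at the point $F(x,y,z)$. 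Interchanging $b$ and $c$ yields a rational map $G\colon Z_2\dashrightarrow Z_1$ given by the identical formula.

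The remaining step is to check $G\circ F=\mathrm{id}$ and $F\circ G=\mathrm{id}$, and this is the only point requiring care. Using homogeneity, $C(zy)=z\,C(y)$, and then $C(y)^2=\varphi(y)I$ gives $x\,C(y)\,C(zy)=z\varphi(y)\,x$, so $G\circ F$ sends $(x,y,z)$ to $\bigl(z\varphi(y)\,x,\ z\varphi(y)\,y,\ z^2\varphi(y)\bigr)$, which is the same point of projective space as $(x,y,z)$; the computation for $F\circ G$ is the same. Hence $F$ is a birational isomorphism $Z_1\dashrightarrow Z_2$, so their function fields agree and the two forms are birationally equivalent. The main obstacle is bookkeeping---keeping straight the two $\varphi$-slots of each form, the homogeneity degrees, and the reduction to projective coordinates in the composite---all of which is routine once Lemma~\ref{Cnmatrix} is in hand. (Alternatively one could bypass explicit formulas and invoke that two anisotropic Pfister neighbours of the same anisotropic Pfister form with equal dimension have isomorphic function fields, via Knebusch's generic splitting tower; that is the route in which the hypothesis that $\varphi$ is anisotropic would actually be used.)
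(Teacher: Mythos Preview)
Your proof is correct and follows essentially the same strategy as the paper's: both exploit the multiplicativity $\varphi\cong\langle\varphi(y)\rangle\varphi$ of the anisotropic Pfister form to write down an explicit birational transformation between the two quadrics. You package it as a single projective rational map $F$ with inverse verified via $C(y)^{2}=\varphi(y)I$ from Lemma~\ref{Cnmatrix}, whereas the paper presents it as a chain of three function-field substitutions $x'=Cx$, $y'=y/\varphi(y)$, $(x'',z')=(x'/z,1/z)$; after scaling your $F$ by $z\varphi(y)$ in projective coordinates the two maps coincide.
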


\begin{proof} We connect the quadrics defined by these two forms by a sequence of birationally isomorphic ones. Let $(x, y, z)$ be the generic zero for the form $\varphi \perp \langle -b\varphi \rangle \perp \langle - c \rangle$ then
$$\varphi(x) - b\varphi(y) - cz^2 = 0$$

Since $\varphi$ is Pfister and $\varphi(y) \in D_{k(y)}(\varphi)$, it follows $\varphi \cong \varphi(y)\varphi$ over $k(y)$. That means there exists a matrix $C \in GL(k(y))$ such that $\varphi(x) = \varphi(y)\varphi(Cx)$. Let $x' = Cx$ then $ k(x, y, z) = k(x', y, z)$ and
\begin{align*}
\varphi(y)\varphi(x') - b\varphi(y) - cz^2 & = 0 \text{, hence} \\
\varphi(x') - b - c\frac{z^2}{\varphi(y)} & = 0
\end{align*}

Now let $y' = \frac{y}{\varphi(y)}$ then $ k(x, y, z) = k(x', y', z)$ and
\begin{align*}
\varphi(x') - b - cz^2\varphi(y') & = 0 \text{, hence} \\
\frac{\varphi(x')}{z^2} - \frac{b}{z^2} - c\varphi(y') & = 0
\end{align*}

At last let $x'' = \frac{x'}{z}$ and $z' = \frac{1}{z}$ then $(x'', y', z')$ is a generic zero for $\varphi \perp \langle - c\varphi \rangle \perp \langle -b \rangle, k(x, y, z) = k(x'', y', z')$ and
$$\varphi(x'') - c\varphi(y') - bz'^2 = 0$$

Therefore the two forms $\varphi \perp \langle -b\varphi \rangle \perp \langle - c \rangle$ and $\varphi \perp \langle - c\varphi \rangle \perp \langle -b \rangle$ are birationally equivalent.
\end{proof}

\begin{lemma} \label{scalar4forms} If $\varphi = \langle\langle a_1,..., a_n \rangle\rangle$ is a anisotropic Pfister form then the two forms $\varphi \perp \langle -b \rangle$ and $\varphi \perp \langle -b\varphi(x_0) \rangle$ with $\varphi(x_0) \neq 0$ are birationally equivalent. In particular $\varphi \perp \langle -b \rangle \approx \varphi \perp \langle -bN_{k(\sqrt{a_i})/k}(u) \rangle$ for any nonzero norm $N_{k(\sqrt{a_i})/k}(u)$.
\end{lemma}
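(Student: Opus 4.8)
The plan is to follow the generic-zero substitution argument used in the proof of Lemma \ref{interchange4forms}, the only extra ingredient being the roundness of Pfister forms over the base field: for a Pfister form $\varphi$ and any nonzero $\lambda \in D_{k}(\varphi)$ one has $\lambda\varphi \cong \varphi$, so there is a matrix $C \in GL_{2^{n}}(k)$ with $\varphi(Cx) = \lambda\,\varphi(x)$ for all $x$ (see \cite{Lam}).

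First I would let $(x,z)$ be a generic zero of $\varphi \perp \langle -b\rangle$, so that $\varphi(x) - bz^{2} = 0$ over $k(\varphi \perp \langle -b\rangle) = k(x,z)$. Put $\lambda = \varphi(x_{0})$; by hypothesis $\lambda \neq 0$, and trivially $\lambda \in D_{k}(\varphi)$. Multiplying the relation through by $\lambda$ gives $\lambda\,\varphi(x) - b\lambda\,z^{2} = 0$. Choosing $C \in GL_{2^{n}}(k)$ with $\varphi(Cx) = \lambda\,\varphi(x)$ and setting $x' = Cx$, invertibility of $C$ over $k$ yields $k(x',z) = k(x,z)$, and the relation becomes $\varphi(x') - b\lambda\,z^{2} = 0$, which exhibits $(x',z)$ as a generic zero of $\varphi \perp \langle -b\varphi(x_{0})\rangle$. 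Hence $k(\varphi \perp \langle -b\rangle) = k(x,z) = k(x',z) = k(\varphi \perp \langle -b\varphi(x_{0})\rangle)$, so the two forms are birationally equivalent. (Equivalently, one may note directly that $\varphi(x_{0})\bigl(\varphi \perp \langle -b\rangle\bigr) \cong \varphi \perp \langle -b\varphi(x_{0})\rangle$ and that multiplying a form by a nonzero scalar does not change the quadric it cuts out.)

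For the final assertion, write $u = c + d\sqrt{a_{i}}$, so that $N_{k(\sqrt{a_{i}})/k}(u) = c^{2} - a_{i}d^{2}$ is a value of the binary form $\langle 1, -a_{i}\rangle$. Since $\varphi = \langle\langle a_{1},\dots,a_{n}\rangle\rangle \cong \langle 1, -a_{i}\rangle \otimes \langle\langle a_{1},\dots,\widehat{a_{i}},\dots,a_{n}\rangle\rangle$ contains $\langle 1, -a_{i}\rangle$ as an orthogonal subform, we get $N_{k(\sqrt{a_{i}})/k}(u) \in D_{k}(\langle 1, -a_{i}\rangle) \subseteq D_{k}(\varphi)$, so $N_{k(\sqrt{a_{i}})/k}(u) = \varphi(x_{0})$ for some $x_{0}$ with $\varphi(x_{0}) \neq 0$ (using $N_{k(\sqrt{a_{i}})/k}(u) \neq 0$), and applying the first part gives $\varphi \perp \langle -b\rangle \approx \varphi \perp \langle -bN_{k(\sqrt{a_{i}})/k}(u)\rangle$. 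There is no serious obstacle here; the one point to be careful about is that the change of coordinates $x \mapsto Cx$ must be defined over $k$ itself, so that it induces an isomorphism of the two function fields — and that is precisely what roundness of $\varphi$ over $k$ provides, once we know $\varphi(x_{0}) \neq 0$.
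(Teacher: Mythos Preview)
Your proof is correct and follows essentially the same generic-zero substitution as the paper's, using roundness of $\varphi$ over $k$ to produce $C\in GL_{2^{n}}(k)$; the only cosmetic differences are that the paper starts from a generic zero of $\varphi\perp\langle -b\varphi(x_{0})\rangle$ (rather than of $\varphi\perp\langle -b\rangle$) and rescales the last coordinate as well, and that it records the final assertion more tersely by simply ``choosing $x_{0}$ with $\varphi(x_{0})=N_{k(\sqrt{a_{i}})/k}(u)$.'' Your parenthetical observation that $\varphi(x_{0})\bigl(\varphi\perp\langle -b\rangle\bigr)\cong\varphi\perp\langle -b\varphi(x_{0})\rangle$ defines the same projective quadric is a legitimate and even slicker shortcut.
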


\begin{proof} We use the same approach as in lemma \ref{interchange4forms}. Let $(x, y)$ be a generic zero for the form $\varphi \perp \langle -b\varphi(x_0) \rangle$ then
\begin{align*}
\varphi(x) - b \varphi(x_0)y^2
& = 0 \text{, hence} \\
\varphi(x_0)\varphi(x) - b\varphi(x_0)^2y^2
& = 0
\end{align*}

Again $\varphi \cong \varphi(x_0)\varphi$ over $k$, i.e. there exists a matrix $C \in GL(k)$ such that $\varphi(Cx) = \varphi(x_0)\varphi(x)$. Let $x' = Cx$ and $y' = \varphi(x_0)y$ then $(x', y')$ is a generic zero for $\varphi \perp \langle -b \rangle$, $k(x, y) = k(x', y')$ and
$$\varphi(x') - by'^2 = 0$$

Therefore the two forms $\varphi \perp \langle -b \rangle$ and $\varphi \perp \langle -b\varphi(x_0) \rangle$ with $\varphi(x_0) \neq 0$ are birationally equivalent. The last statement follows when we choose $x_0$ such that $\varphi(x_0) = N_{k(\sqrt{a_i})/k}(u)$.
\end{proof}

\begin{proposition} \label{birationalsubforms} If two Pfister form $\varphi$ and $\varphi'$ are equivalent then their associated subforms $\psi$ and $\psi'$ are birationally equivalent.
\end{proposition}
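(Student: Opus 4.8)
The plan is to reduce, via the Chain P-equivalence Theorem (Theorem~\ref{chainPequivalence}) and the classical common slot theorem for $2$-fold Pfister forms, to a sequence of elementary moves on the tuple $(a_1,\dots,a_n)$, and then to realize each move as a birational equivalence of the associated subforms by means of Lemmas~\ref{interchange4forms} and~\ref{scalar4forms}. Concretely, $\varphi\cong\varphi'$ yields by Theorem~\ref{chainPequivalence} a chain $\varphi=\varphi_0,\dots,\varphi_m=\varphi'$ of Pfister forms with consecutive terms simply P-equivalent; by transitivity of birational equivalence it is enough to treat one step, so write $\varphi_\ell=\langle\langle a_1,\dots,a_n\rangle\rangle$, $\varphi_{\ell+1}=\langle\langle a_1',\dots,a_n'\rangle\rangle$ with $a_k=a_k'$ for $k\ne i,j$ and $\langle\langle a_i,a_j\rangle\rangle\cong\langle\langle a_i',a_j'\rangle\rangle$, and let $\psi,\psi'$ be the subforms $\langle\langle a_1,\dots,a_{n-1}\rangle\rangle\perp\langle -a_n\rangle$ and $\langle\langle a_1',\dots,a_{n-1}'\rangle\rangle\perp\langle -a_n'\rangle$. (If $\varphi$ is isotropic the statement is immediate, since $\psi$ and $\psi'$ are then isotropic forms of equal dimension, so $k(\psi)$ and $k(\psi')$ are purely transcendental over $k$ of the same transcendence degree; hence we may assume $\varphi$, and so every $\varphi_\ell$, anisotropic.)

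If $j\le n-1$ the two altered slots lie among the first $n-1$ entries, so $a_n=a_n'$ while $\langle\langle a_1,\dots,a_{n-1}\rangle\rangle$ and $\langle\langle a_1',\dots,a_{n-1}'\rangle\rangle$ are themselves simply P-equivalent, hence isometric; thus $\psi\cong\psi'$ and the quadrics are even isomorphic. The substantive case is $j=n$: then $a_k=a_k'$ for $k\notin\{i,n\}$ and $\langle\langle a_i,a_n\rangle\rangle\cong\langle\langle a_i',a_n'\rangle\rangle$. Put $\tau=\langle\langle\mathbf c\rangle\rangle$ for the $(n-2)$-fold Pfister form on the unchanged slots $\mathbf c=(a_1,\dots,\widehat{a_i},\dots,a_{n-1})$; using that Pfister forms are symmetric in their slots we get $\psi\cong\tau\perp\langle -a_i\rangle\tau\perp\langle -a_n\rangle=\langle\langle\mathbf c,a_i\rangle\rangle\perp\langle -a_n\rangle$ and likewise $\psi'\cong\langle\langle\mathbf c,a_i'\rangle\rangle\perp\langle -a_n'\rangle$. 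The common slot theorem lets one pass from the ordered pair $(a_i,a_n)$ to $(a_i',a_n')$ through finitely many ordered pairs, each step being either a transposition $(x,y)\mapsto(y,x)$ or a norm scaling $(x,y)\mapsto(x,\,yN_{k(\sqrt{x})/k}(u))$. After tensoring the first slot with $\tau$ and appending $-(\text{second slot})$ as a line, a transposition step becomes an instance of Lemma~\ref{interchange4forms} (with its ``$\varphi$'' equal to $\tau$) and a norm-scaling step becomes an instance of Lemma~\ref{scalar4forms} (with its ``$\varphi$'' equal to $\langle\langle\mathbf c,x\rangle\rangle$); every Pfister form occurring is a subform of $\langle\langle\mathbf c\rangle\rangle\otimes\langle\langle x,y\rangle\rangle\cong\varphi$, hence anisotropic, so both lemmas apply. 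Stringing these birational equivalences together gives $\psi\cong\langle\langle\mathbf c,a_i\rangle\rangle\perp\langle -a_n\rangle\approx\dots\approx\langle\langle\mathbf c,a_i'\rangle\rangle\perp\langle -a_n'\rangle\cong\psi'$.

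Equivalently, one may note that isometry of $n$-fold Pfister forms is generated by the moves ``permute the $a_i$'' and ``replace some $a_s$ by $a_sN_{k(\sqrt{a_t})/k}(u)$'' (Theorem~\ref{chainPequivalence} together with the weight-$2$ common slot theorem), and then run through these moves as above: permutations and norm scalings among the first $n-1$ slots leave $\psi$ unchanged up to isometry, a transposition involving the $n$-th slot reduces after a Pfister reordering to Lemma~\ref{interchange4forms}, scaling $a_n$ itself is Lemma~\ref{scalar4forms} directly, and scaling some $a_s$ with $s\le n-1$ by a norm from $k(\sqrt{a_n})$ is a composite ``swap, scale, swap back''. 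The step I expect to be the main obstacle is the bookkeeping around the distinguished linear slot $a_n$: Theorem~\ref{chainPequivalence} chains only the full Pfister forms and is insensitive to which entry occupies the position that $\psi$ singles out, so one must first use the symmetry of Pfister forms to bring the relevant slot into last Pfister position before Lemma~\ref{interchange4forms} can be invoked, and one must split the weight-$2$ isometry $\langle\langle a_i,a_n\rangle\rangle\cong\langle\langle a_i',a_n'\rangle\rangle$ into the transpositions and norm scalings that Lemmas~\ref{interchange4forms} and~\ref{scalar4forms} are built to realize; verifying that all intermediate Pfister forms stay anisotropic, so those lemmas remain applicable, is the other point needing attention.
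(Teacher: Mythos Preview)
Your argument is correct, but it works harder than necessary. Both proofs invoke Theorem~\ref{chainPequivalence} and reduce to a single simple-P step, and both dispose of the case $j\neq n$ by an outright isometry of subforms. The divergence is in the case $j=n$: you break the $2$-fold isometry $\langle\langle a_i,a_n\rangle\rangle\cong\langle\langle a_i',a_n'\rangle\rangle$ via the common slot theorem into a chain of transpositions and norm scalings, and then realize each such move on the associated subform by one of Lemmas~\ref{interchange4forms} and~\ref{scalar4forms}. The paper instead notes that a single application of Lemma~\ref{interchange4forms}, swapping the linear slot $a_n$ with the last Pfister slot $a_{n-1}$ (or with $a_{n-2}$ when $i=n-1$), moves \emph{both} altered entries into the Pfister part of the subform; there the given $2$-fold isometry can be applied wholesale to produce an isometry of subforms, and one more swap restores the linear slot. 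This bypasses the common slot theorem and never needs Lemma~\ref{scalar4forms} at all. Your route has the merit of reducing everything to the explicit generators of isometry classes in weight~$2$, at the cost of a longer chain and the extra (correctly handled) check that every intermediate Pfister form stays anisotropic; the paper's route trades that for the simple observation that once the distinguished slot is parked elsewhere, simple P-equivalence already \emph{is} an isometry of the relevant subforms.
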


\begin{proof} By the Chain P-equivalence Theorem, $\varphi \approxeq \varphi'$. So there exists a sequence of Pfister forms $\varphi_0, \varphi_1,..., \varphi_t,..., \varphi_{m-1}, \varphi_m$ such that $\varphi = \varphi_0, \varphi' = \varphi_m$ and $\varphi_t$ is simply P-equivalent to $\varphi_{t+1}, 0 \leq t \leq m - 1$. Write $\varphi_t = \langle\langle a_1,..., a_i,..., a_j,..., a_n \rangle\rangle$ and $\varphi_{t+1} = \langle\langle a_1,..., a_i',..., a_j',..., a_n \rangle\rangle$ where $\langle\langle a_i, a_j \rangle\rangle \cong \langle\langle a_i', a_j' \rangle\rangle$. If $i = j$ then there is nothing to do. Otherwise, we consider each case separately.
\begin{enumerate}
\item If $j \neq n$ then
\begin{align*}
\psi_t & = \langle\langle a_1,..., a_i,..., a_j,..., a_{n-1} \rangle\rangle \perp \langle -a_n \rangle \\
& \cong \langle\langle a_1,..., a_i',..., a_j',..., a_{n-1} \rangle\rangle \perp \langle -a_n \rangle \\
& = \psi_{t+1}
\end{align*}

\item If $j = n$ and $i \neq n - 1$ then by lemma \ref{interchange4forms},
\begin{align*}
\psi_t & = \langle\langle a_1,..., a_i,..., a_{n-1} \rangle\rangle \perp \langle -a_j \rangle \\
& \approx \langle\langle a_1,..., a_i,..., a_j \rangle\rangle \perp \langle -a_{n-1} \rangle \\
& \cong \langle\langle a_1,..., a_i',..., a_j' \rangle\rangle \perp \langle -a_{n-1} \rangle \\
& \approx \langle\langle a_1,..., a_i',..., a_{n-1} \rangle\rangle \perp \langle -a_j' \rangle \\
& = \psi_{t+1}
\end{align*}

\item If $j = n$ and $i = n - 1$ then again by lemma \ref{interchange4forms},
\begin{align*}
\psi_t & = \langle\langle a_1,..., a_{n-2}, a_i \rangle\rangle \perp \langle -a_j \rangle \\
& \cong \langle\langle a_1,..., a_i, a_{n-2} \rangle\rangle \perp \langle -a_j \rangle \\
& \approx \langle\langle a_1,..., a_i, a_j \rangle\rangle \perp \langle -a_{n-2} \rangle \\
& \cong \langle\langle a_1,..., a_i', a_j' \rangle\rangle \perp \langle -a_{n-2} \rangle \\
& \approx \langle\langle a_1,..., a_i', a_{n-2} \rangle\rangle \perp \langle -a_j' \rangle \\
& \cong \langle\langle a_1,..., a_{n-2}, a_i' \rangle\rangle \perp \langle -a_j' \rangle \\
& = \psi_{t+1}
\end{align*}
\end{enumerate}

Hence $\psi_t \approx \psi_{t+1}$ for all $t$, and $\psi \approx \psi'$.
\end{proof}

\begin{remark} \label{specialPneighbors} Let $\varphi$ be a Pfister form of dimension greater than or equal to 2, $c \in k^{\times}$, and $\varphi_1$ a nonzero subform of $\varphi$. In \cite{AhmadFFofPN} H. Ahmad called $(\varphi, c, \varphi_1) $ a Pfister triple, $\varphi \perp \langle c \rangle$ the base form, $\varphi \perp c\varphi_1$ the form defined by the triple, $\varphi \perp c\varphi$ the associated Pfister form, and any form similar to such $\varphi \perp c\varphi_1$ a special Pfister neighbor. In this setting the forms in lemma \ref{interchange4forms} and the forms in lemma \ref{scalar4forms} are pairwise special Pfister neighbors of the same dimensions and have the same associated Pfister forms $\varphi \otimes \langle\langle b, c \rangle\rangle$ and $\varphi \otimes \langle\langle b \rangle\rangle$, respectively. The lemmas then follow from his more general \cite[1.6]{AhmadFFofPN}.
\end{remark}

\begin{remark} \label{stronglymultiplicativeforms} One sees that lemmas \ref{interchange4forms} and \ref{scalar4forms} hold for any strongly multiplicative form $\varphi$ as defined in \cite{LamItoQFoverF}. The work lies with anisotropic Pfister forms. The remaining strongly multiplicative forms are isotropic, hence their function fields are rational and both lemmas become trivial.
\end{remark}

Proposition \ref{birationalsubforms} enables us to compare the standard norm varieties for two equal symbols.

\begin{corollary} \label{p=2corollary} The standard norm varieties $X(a_1,..., a_n)$ and $X(b_1,..., b_n)$ for $\{a_1,..., a_n\}$ and $\{b_1,..., b_n\}$ are birationally isomorphic if $\{a_1,..., a_n\} = \{b_1,..., b_n\}$ in $K^M_n(k)/2$.
\end{corollary}

\begin{proof} By \cite[6.20]{EKMAandGTofQF}, the two Pfister forms $\varphi = \langle\langle a_1,..., a_n \rangle\rangle$ and $\varphi' = \langle\langle b_1,..., b_n \rangle\rangle$ are equivalent. Proposition \ref{birationalsubforms} now implies their associated subforms $\psi$ and $\psi'$ are birationally equivalent. By theorem \ref{varietytoquadric}, $X(a_1,..., a_n)$ and $X(b_1,..., b_n)$ are birationally isomorphic.
\end{proof}

\begin{example} \label{p=2example} For any nonzero norm $N_{k(\sqrt{a_i})/k}(u)$, we know $\{a_1,..., a_i,..., a_j,..., a_n\} = \{a_1,..., a_i,..., a_jN_{k(\sqrt{a_i})/k}(u),..., a_n\}$ in $K^M_n(k)/2$. By corollary \ref{p=2corollary}, their standard norm varieties are birationally isomorphic. Or we can use theorem \ref{varietytoquadric} and lemma \ref{scalar4forms}, bypassing the Chain P-equivalence Theorem to see this as well.
\end{example}

\section{When $p > 2 \text{ and } n = 2$}\label{p>2,n=2}
When $p > 2, n = 2$, we show that the standard norm varieties are birationally isomorphic to Severi-Brauer varieties.

\begin{theorem} \label{varietytoSB} The standard norm variety $X(a, b)$ for $\{a, b\}$ is birationally isomorphic to the Severi-Brauer variety $SB(A)$ associated to the cyclic algebra $A = (a, b, \zeta_p)_k$.
\end{theorem}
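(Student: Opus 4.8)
The plan is to make the norm-form description of $X(a,b)$ completely explicit and then recognize it as a standard affine chart of the Severi-Brauer variety $SB(A)$ of the cyclic algebra $A = (a,b,\zeta_p)_k$. Starting as in Remark \ref{startofconstruction}, the construction for $n=2$ begins with $X(a) = \mathrm{Spec}(L)$, $L = k(\sqrt[p]{a})$, and produces $p^{-1}(U) \cong \mathrm{Spec}(L)$ over $U \cong \mathrm{Spec}(k)$; the norm map $N$ is then the norm form $N_{L/k}$ of the degree-$p$ \'etale (here, field) extension $L/k$, a homogeneous polynomial of degree $p$ in the coordinates $x_0, x_1, \dots, x_{p-1}$ of $L$ with respect to the basis $1, \sqrt[p]{a}, \dots, \sqrt[p]{a}^{\,p-1}$. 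Thus $X(a,b) = Z(N_{L/k} - b)$, and after projectivizing, $X(a,b)$ is the hypersurface $\{N_{L/k}(x_0, \dots, x_{p-1}) = b\, x_p^{\,p}\} \subset \mathbb{P}^p_k$.

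Next I would recall the standard presentation of the cyclic algebra $A = (a,b,\zeta_p)_k$: it is generated by $u, v$ with $u^p = a$, $v^p = b$, $vu = \zeta_p uv$, so $A = \bigoplus_{i=0}^{p-1} L v^i$ with $L = k(u) = k(\sqrt[p]{a})$, and conjugation by $v$ acts on $L$ as the generator $\sigma$ of $\mathrm{Gal}(L/k)$, $\sigma(u) = \zeta_p u$. The reduced norm $\mathrm{Nrd}_{A/k}$ restricted to $L$ is exactly $N_{L/k}$, and more precisely $\mathrm{Nrd}(x + v y) $ for $x, y \in L$ has a known ``corestriction'' shape. The key classical fact I would invoke is that $SB(A)$ for a degree-$p$ cyclic algebra is birational to the affine variety $\{\,\mathrm{Nrd}_{A/k}(\xi) = b\,\}$ where $\xi$ ranges over the $k$-subspace $L \oplus Lv \oplus \dots$ appropriately normalized — equivalently, $SB(A)$ is birational to the norm-one hypersurface / the ``Amitsur'' description via the reduced norm form, and cutting by one linear condition coming from the element $v$ (which satisfies $v^p = b$) gives precisely $Z(N_{L/k} - b)$. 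Concretely, I would exhibit the birational map by sending the generic point of $X(a,b)$, a solution of $N_{L/k}(x_0,\dots,x_{p-1}) = b$, to the rank-one left ideal of $A$ generated by $v - (x_0 + x_1 u + \dots + x_{p-1}u^{p-1})$, using that $A \otimes_k k(X(a,b))$ splits because $\{a,b\} = 0$ in $K_2^M$ of the function field (which holds since $X(a,b)$ is a splitting variety). Going the other way, over $k(SB(A))$ the algebra splits, so $v$ becomes a matrix whose characteristic data recovers a point on $N_{L/k} = b$; tracking that these two constructions are mutually inverse on dense opens is the content of the birational isomorphism.

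I expect the main obstacle to be making the identification of the two affine models precise rather than merely plausible: one must pin down exactly which affine chart of $SB(A)$ is in play and verify that the reduced norm form of $A$, restricted to the relevant $p$-dimensional $k$-subspace and set equal to $b$, is literally the polynomial $N_{L/k}(x_0,\dots,x_{p-1}) - b$ produced by the symmetric-power construction (up to a change of coordinates over $k$). This requires unwinding Joukhovitski's identification of the vector bundle $V = \mathrm{Spec}(S^\bullet \mathcal{A}^\vee)$ with the affine space underlying $L$ and of $N$ with $N_{L/k}$ — which Remark \ref{startofconstruction} already sets up — and then matching it with the cyclic-algebra description; since both are varieties over $k$ of dimension $p-1$ that become rational over any splitting field, the cleanest route is to produce the explicit birational map on generic points as above and check it is defined in both directions. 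I would also note the degenerate possibility that $A$ is split (equivalently $\{a,b\}=0$ already over $k$), in which case $SB(A) \cong \mathbb{P}^{p-1}_k$ is rational and $X(a,b)$, being a smooth splitting variety with a rational point, is likewise rational, so the statement is immediate; the substance is the non-split case handled by the ideal-theoretic map.
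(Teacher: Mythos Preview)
Your opening identification of $X(a,b)$ with $Z(N_{L/k}-b)$ is exactly how the paper begins. After that the two arguments diverge. The paper never touches ideals of $A$ over $k$: it base-changes to $L=k(\sqrt[p]{a})$, where $SB(A_L)\cong\mathbb{P}^{p-1}_L$ and the norm factors as $\prod_i\sigma^i(x)$, writes down the map in explicit projective coordinates
\[
(x,\sigma(x),\ldots,\sigma^{p-1}(x))\ \longmapsto\ \bigl(x:x\sigma(x):\cdots:x\sigma(x)\cdots\sigma^{p-1}(x)\bigr),
\]
exhibits its inverse on function fields, checks by direct computation that both are equivariant for $G=\mathrm{Gal}(L/k)$ acting on $\mathbb{P}^{p-1}_L$ through the cocycle defining $A$, and then descends to $k$. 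Your route---attaching to $\ell\in L$ with $N_{L/k}(\ell)=b$ the ideal of $A$ determined by the singular element $v-\ell$---is the classical Ch\^atelet description of an affine chart of a Severi--Brauer variety and is a legitimate alternative. Its advantage is that the map is intrinsic and defined directly over $k$; the paper's advantage is that everything is a bare-hands coordinate calculation requiring nothing about reduced norms or the functor of points of $SB(A)$ beyond $SB(A_L)\cong\mathbb{P}^{p-1}_L$.

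Two places to tighten. First, for $p\ge 3$ the element $v-\ell$ has matrix rank $p-1$ over any splitting field, so the left ideal it \emph{generates} is maximal, not ``rank one''; take its annihilator instead, or use that maximal left ideals parametrize $SB(A)$ equally well. Second, for the inverse you need more than ``$v$ becomes a matrix whose characteristic data recovers a point'': the reduced characteristic polynomial of $v$ is always $X^p-b$ and carries no information about $\ell$. What actually works is that the tautological ideal $I$ over $k(SB(A))$ is generically free of rank one as an $L$-module, left multiplication by $v$ on $I$ is $\sigma$-semilinear and hence of the form $m\mapsto \ell\cdot\sigma(m)$ for a well-defined $\ell$, and then $v^p=b$ forces $N_{L/k}(\ell)=b$. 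With these fixes your argument goes through.
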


\begin{proof} Again if we start the symmetric power construction with $X(a) = Spec (L)$ where $L = k(\sqrt[p]{a})$ then $X(a, b) = Z(N_{L/k} - b)$ by remark \ref{startofconstruction}. We consider what happens in a split case, where $A_L \cong M_p(L)$ and $SB(A_L) \cong \mathbb{P}^{p-1}_L$. Furthermore, if $G = Gal(L/k) = \langle \sigma \rangle$ of order $p$ then over $L$, the norm $N_{L/k}(x)$ splits in to a product $\prod_{i=0}^{p-1}\sigma^i(x)$ for every $x \in L$. Define
$$U_L = \left\{I \subset M_p(L) \text{ where } I = \left\{
\left(\begin{array}{cccc}
\alpha_0 & 0 & . \, . & 0 \\ 
. & . & . \, . & . \\
: & : & . & : \\
\alpha_{p-1} & 0 & . \, . & 0
\end{array}\right) M,
\alpha_i \neq 0 \text{ for all } i \text{ and } M \in M_p(L) \right\} \right\}$$
then $U_L$ is an open subset in $SB(A_L)$ and we have a diagram
\begin{diagram}
Z(N_{L/k} - b)_L & \rTo^{f_L} & U_L & \rTo^{open} & SB(A_L) \\
\dTo_{/G} & & \dTo_{/G} & & \dTo_{/G} \\
Z(N_{L/k} - b) & \rTo^f & U & \rTo^{open} & SB(A) \\
\end{diagram}
where $f_L$ can be described as follows:
\begin{align*}
Z(N_{L/k} - b)_L & \rTo^{f_L} U_L \\
(x, \sigma(x),..., \sigma^{p-1}(x)) & \mapsto (x : x\sigma(x) :...: x\sigma(x) ... \sigma^{p-1}(x))
\end{align*}
if we abuse notation and write points in $SB(A_L)$ in projective coordinates. We verify that $f_L$ is $G$-equivariant,
\begin{align*}
f_L(\sigma \cdot (x, \sigma(x), ... , \sigma^{p-2}(x), \sigma^{p-1}(x))) & = f(\sigma(x), \sigma^2(x), ... , \sigma^{p-1}(x), \sigma^p(x)) \\
& = (\sigma(x) : \sigma(x)\sigma^2(x) : ... : \sigma(x) ... \sigma^{p-1}(x) : \sigma(x)\sigma^2(x) ... \sigma^p(x)) \\
& = (\sigma(x) : \sigma(x)\sigma^2(x) : ... : \sigma(x) ... \sigma^{p-1}(x) : b)
\end{align*}
while
\begin{align*}
\sigma \cdot f_L(x, \sigma(x),..., \sigma^{p-1}(x)) & = \left(\begin{array}{ccccc} 0 & 1 & 0 & . & . \\ 0 & 0 & 1 & . & . \\ : & : & : & : & : \\ 0 & . & . & 0 & 1 \\ b & 0 & . & . & 0 \end{array}\right) \left(\begin{array}{c} x \\ x\sigma(x) \\ : \\ x\sigma(x) ... \sigma^{p-2}(x) \\ x\sigma(x) ... \sigma^{p-1}(x)\end{array}\right) \\
& = (x\sigma(x) : x\sigma(x)\sigma^2(x) : ... : x\sigma(x) ... \sigma^{p-1}(x) : bx) \\
& = (\sigma(x) : \sigma(x)\sigma^2(x) : ... : \sigma(x) ... \sigma^{p-1}(x): b)
\end{align*}

In function fields, we have an isomorphism of the same name $f_L$ from $L(U_L) = L(\frac{t_1}{t_0}, ..., \frac{t_p}{t_0})$ to $L(Z(N_{L/k} - b)_L) = L(x, \sigma(x),..., \sigma^{p-1}(x))$,
\begin{align*}
L(\frac{t_1}{t_0}, ... , \frac{t_p}{t_0})
& \rTo^{f_L} L(x, \sigma(x), ... , \sigma^{p-1}(x)) \\
\frac{t_i}{t_0}
& \mapsto x\sigma(x) ... \sigma^{i-1}(x)
\end{align*}
where $i = 1, ... , p$ and $\frac{t_p}{t_0} = b$ with inverse
\begin{align*}
L(x, \sigma(x), ... , \sigma^{p-1}(x)) & \rTo^{f_L^{-1}} L(\frac{t_1}{t_0}, ... , \frac{t_p}{t_0}) \\
\sigma^{i-1}(x) & \mapsto \frac{t_i}{t_{i-1}}
\end{align*}

We verify that $f_L$ respects the $G$-action,
\begin{align*}
f_L(\sigma \cdot \frac{t_i}{t_0})
& = f_L(\frac{t_{i+1}}{t_1}) \\
& = f_L((\frac{t_{i+1}}{t_0})(\frac{t_1}{t_0})^{-1}) \\
& = x\sigma(x) ... \sigma^i(x)x^{-1} \\
& = \sigma(x) ... \sigma^i(x)
\end{align*}
while
\begin{align*}
\sigma \cdot f_L(\frac{t_i}{t_0}) & = \sigma \cdot (x\sigma(x) ... \sigma^{i-1}(x)) \\
& = \sigma(x) ... \sigma^i(x)
\end{align*}

Therefore $Z(N_{L/k} - b)_L$ is birationally isomorphic to $U_L$. So $Z(N_{L/k} - b)$ is birationally isomorphic to $U$, hence to $SB(A)$.
\end{proof}

This theorem enables us to compare the standard norm varieties for two equal symbols.

\begin{corollary} \label{n=2corollary} The standard norm varieties $X(a_1, a_2)$ and $X(b_1, b_2)$ for $\{a_1, a_2\}$ and $\{b_1, b_2\}$ are birationally isomorphic if $\{a_1, a_2\} = \{b_1, b_2\}$ in $K^M_2(k)/p$.
\end{corollary}

\begin{proof}
By the norm residue homomorphism $K^M_2(k)/p \rTo Br_p(k)$, the classes of $(a_1, a_2, \zeta_p)_k$ and $(b_1, b_2, \zeta_p)_k$ are equal in the subgroup $Br_p(k)$ of elements of exponent $p$ in the Brauer group $Br(k)$. Since they have the same dimension, $(a_1, a_2, \zeta_p)_k$ and $(b_1, b_2, \zeta_p)_k$ are isomorphic as algebras. Hence $SB((a_1, a_2, \zeta_p)_k) \cong SB((a_1, a_2, \zeta_p)_k)$. It follows from the theorem that $X(a_1, a_2) \approx X(b_1, b_2)$.
\end{proof}

\section{When $p> 2 \text{ and } n = 3$}\label{p>2,n=3}
When $p > 2, n = 3$, we show that the standard norm varieties are birationally isomorphic to varieties defined by reduced norms of cyclic algebras.

\begin{theorem} \label{varietytovarietydefinedbyreducednormofcyclicalgebra} The standard norm variety $X(a, b, c)$ for $\{a, b, c\}$ is birationally isomorphic to $Z(Nrd_{A/k} - c)$, where $A = (a, b, \zeta_p)_k$.
\end{theorem}

\begin{proof} We consider what happens in a split case. Let $L = k(\sqrt[p]{a})$ and use $SB(A)$ as the standard norm variety $X(a, b)$ for $\{a, b\}$. Again $A_L \cong M_p(L)$ and $SB(A_L) \cong \mathbb{P}^{p-1}_L$. Our symmetric power construction looks like the front square over $k$ and the back square over $L$,
\begin{diagram}
 & & SB(A_L) \times S^{p-1}(SB(A_L)) & \lTo & & & p^{-1}(U_L) & & \\
 & \ldTo^{/G} & \vLine & & & \ldTo^{/G} & & & \\
SB(A) \times SB^{p-1}(A) & & & \lTo & p^{-1}(U) & & & & \\
 \dTo^p & & \dTo^{p_L} & & \dTo^{p|} & & \dTo^{p_L|} & & \\
 & & S^p(SB(A_L)) & \lTo & \VonH & \hLine & U_L & \lTo^{\pi_L} & V_L \\
 & \ldTo^{/G} & & & & \ldTo^{/G} & & \ldTo^{/G} \\
S^p(SB(A)) & & & \lTo & U & \lTo^{\pi} & V \\
\end{diagram}

Now let $X_L$ denote the variety of all \'{e}tale subalgebras of degree $p$ in $End_L(L^p)$. If each subalgebra $D \in X_L$ is generated by a matrix $\lambda$ where $\lambda = (\lambda_1,..., \lambda_p)$ its diagonal form then $S_p$ acts trivially on $X_L$ by permuting the diagonal entries. So we have an $S_p$-equivariant map
\begin{align*}
U_L & \rTo^{f_L} X_L \\
(u_1,..., u_p) & \mapsto D
\end{align*}
where $D$ is the \'{e}tale subalgebra whose eigenspaces are the lines $u_1,..., u_p$, with inverse $f_L^{-1} : D \mapsto (u_1,..., u_p)$. This map fits into the following commutative diagram,
\begin{diagram}
U_L & \rTo^{f_L} & X_L \\
\dTo_{/G} & & \dTo_{/G} \\
U & \rTo^f & X \\
\end{diagram}
and we get vector bundles over the last diagram,
\begin{diagram}
 & & U_L & & & \lTo^{\pi_L} & V_L \\
 & \ldTo^{/G} & \dTo_{f_L} & & & \ldTo & \dTo_{f_L^*} \\
U & & & \lTo^{\pi} & V & & \\
\dTo_f & & & & \dTo_{f^*} & & \\
 & & X_L & & & \lTo^{\pi_{X_L}} & V_{X_L} \\
 & \ldTo^{/G} & & & & \ldTo & \\
X & & & \lTo^{\pi_{X}} & V_X & & \\
\end{diagram}

For each $(u_1,..., u_p) \in U_L$, the preimage $p_L^{-1}((u_1,..., u_p))$ consists of $p$ points $y_1,..., y_p$ where each $y_i$ is of the form $(u_i, (u_1,..., \check{u}_i,..., u_p))$. So $\pi_L^{-1}((u_1,..., u_p)) = \{((u_1,..., u_p), x_1,..., x_p) \mid x_i \in L(y_i)\}$. Correspondingly, $\pi_{X_L}^{-1}(D) = \{(D, d) \mid d \in D\}$. Both are algebras of rank $p$ over $L$.
We can describe the back face of the cube pointwise,
\begin{diagram}
((u_1,..., u_p), x_1,..., x_p) & \rTo^{f_L^*} & \left(D, \left(\begin{array}{cccc} x_1 & . & . & . \\ . & . & 0 & . \\ . & 0 & . & . \\ . & . & . & x_p \end{array}\right) \right) \\
\dTo^{\pi_L} & & \dTo^{\pi_{X_L}} \\
(u_1,..., u_p) & \rTo^{f_L} & D \\
\end{diagram}

Note that if $q(t) = a_1t + ... + a_pt^p$ and $D \ni d = q(\lambda)$ with eigenvalues $q(\lambda_i)$ then $f_L^{*^{-1}}(D, d) = ((u_1,..., u_p), q(\lambda_1),..., q(\lambda_p))$.

Therefore in $V_L$ and $V_{X_L}$ we have two birationally isomorphic subvarieties $Z(N - c)_L$ and $Z(Nrd_{A_L/L} - c)$, since
\begin{align*}
Z(N - c)_L & = \{((u_1,..., u_p), x_1,..., x_p) \mid x_1... x_p = c\} \\
& \cong \{(D, d) \mid D \subset A_L \text{ \'{e}tale of rank } p \text{ and } d \in D \text{ with } N_{D/L}(d) = c\} \\
& = \{(D, d) \mid D \subset A_L \text{ \'{e}tale of rank } p \text{ and } d \in D \text{ with } Nrd_{A_L/L}(d) = c\} \\
& \cong \{d \in A_L \mid \langle d \rangle \subset A_L \text{ \'{e}tale of rank } p \text{ and } Nrd_{A_L/L}(d) = c\} \text{ (via } (D, d) \mapsto d) \\
& = \{d \in A_L \mid Nrd_{A_L/L}(d) = c\} \cap \{d \in A_L \mid \text{its minimal polynomial } m_d(t) \\
& \hspace{0.5cm} \text{ is of degree } p\} \\
& = \{d \in A_L \mid Nrd_{A_L/L}(d) = c\} \cap \{d \in A_L \mid x_i \neq x_j \text{ for all of its} \\
& \hspace{0.5cm} \text{ eigenvalues } x_i, x_j\} \\
& \approx \{d \in A_L \mid Nrd_{A_L/L}(d) = c\} \\
& = Z(Nrd_{A_L/L} - c)
\end{align*}

Note that the intersection above is nonempty, it contains for example the diagonal matrix $(\frac{c}{\zeta_p^{(p - 1)/2}}, \zeta_p,..., \zeta_p^{p-1})$, and the second set is open. Hence our standard norm variety $X(a, b, c) = Z(N - c)$ is birationally isomorphic to $Z(Nrd_{A/k} - c)$ over $k$.\
\end{proof}

Knowing that $X(a, b, c)$ is birationally isomorphic to $Z(Nrd_{A/k} - c)$, where $A = (a, b, \zeta_p)_k$ may allow us to compare $X(a, b, c)$ and $X(a', b', c')$ when $\{a, b, c\} = \{a', b', c'\}$ in $K^M_3(k)/p$. If we know $Z(Nrd_{A/k} - c) \approx Z(Nrd_{A'/k} - c')$, where $A' = (a', b', \zeta_p)_k$ then we can draw the same corollary for $p > 2, n = 3$ as we did for $p = 2$ in \ref{p=2corollary} and for $p > 2, n = 2$ in \ref{n=2corollary}.

\newpage\noindent
\begin{bibdiv}
\begin{biblist}
\bibselect{references}
\end{biblist}
\end{bibdiv}

\end{document}